\numberwithin{equation}{section}
\newtheorem{theoreme}{Theorem}[section]
\newtheorem{proposition}{Proposition}[section]
\newtheorem{definition}[theoreme]{Definition}
\newcommand{\RR}{\ensuremath{\mathbb R}}
\newcommand{\EE}{\ensuremath{\mathbb E}}
\newcommand{\NN}{\ensuremath{\mathbb N}}
\newcommand{\I}{\ensuremath{\mathcal I}}
\newcommand{\LL}{\ensuremath{\mathcal L}}
\newcommand{\J}{\ensuremath{\mathcal J}}
\newcommand{\M}{\ensuremath{\mathcal M}}
\newcommand{\xx}{\ensuremath{\mathbf{x}}}
\newcommand{\XX}{\ensuremath{\mathbf{X}}}
\newcommand{\ddd}{\ensuremath{\mathbf{d}}}
\newcommand{\om}{\omega}
\newcommand{\Om}{\Omega}
\newcommand{\la}{\lambda}
\newcommand{\De}{\Delta}
\newcommand{\ep}{\varepsilon}
\newcommand{\ga}{\gamma}
\newcommand{\Ga}{\Gamma}
\newcommand{\limla}{\lim_{\lambda \to 0}}
\newcommand{\ccc}{\ensuremath{\mathbf{c}}}
\newcommand{\limn}{\lim_{n\to\infty}}
\title{The asymptotic value in finite stochastic games}
\author{Miquel Oliu-Barton \footnote{
Combinatoire et Optimisation, IMJ, CNRS UMR 7586, Facult\'e de Math\'ematiques, Universit\'e P. et M. Curie
Paris 6, Tour 15-16, 1er etage, 4 Place Jussieu, 75005 Paris and Universit\'e d'Orsay Paris-Sud https://sites.google.com/site/oliubarton}}
\date{November 19, 2012}
\begin{document}
\maketitle

\abstract{We provide a direct, elementary proof for the existence of $\limla v_\la$, where $v_\la$ is the value 
of $\la$-discounted finite two-person zero-sum stochastic game. 
}

\section{Introduction}\label{model}
Two-person zero-sum stochastic games were introduced by Shapley \cite{shapley53}.
 They are described by a $5$-tuple $(\Om, \I,\J,q,g)$, where $\Om$ is a finite set of states, $\I$ and $\J$ are finite sets of actions,
$g:\Om \times \I \times \J \to [0,1]$ is the payoff, $q: \Om \times \I \times \J \to \De(\Om)$ the transition and,
 for any finite set $X$, $\De(X)$ denotes the set of probability distributions over $X$.  The functions
$g$ and $q$ are bilinearly extended to $\Om\times \De(\I)\times \De(\J)$.
 The stochastic game with initial state $\om\in\Om$ and discount factor $\la\in(0,1]$ is denoted by 
$\Ga_\la(\om)$ and is played as follows:  at stage $m\geq1$, knowing the current state $\om_m$, the players choose actions  
$(i_m,j_m)\in \I\times \J$; their choice produces a stage payoff $g(\om_m,i_m,j_m)$ and influences the transition: a new state $\om_{m+1}$ is chosen according to the probability distribution $q(\cdot|\om_m,i_m,j_m)$.
%and the triple $(i_m,j_m,\om_{m+1})$ is announced to the players.
At the end of the game, player $1$ receives $\sum\nolimits_{m \geq 1}\la(1-\la)^{m-1} g(\omega_m,i_m,j_m)$ from player $2$.
The game $\Ga_\la(\om)$ has a value $v_\la(\om)$, and $v_\la=(v_\la(\om))_{\om\in\Om}$ is the unique fixed point of the so-called Shapley operator \cite{shapley53}, i.e. $v_\la=\Phi(\la,v_\la)$, where for all 
$f\in \RR^\Om$:
\begin{equation}\label{shapley} \Phi(\la,f)(\om)=\mathrm{val}_{(s,t)\in \De(\I)\times \De(\J)}
\{\la g(\om,s,t)+(1-\la)\EE_{q(\cdot|\om,s,t)}[f(\widetilde{\om})]\}.
\end{equation}
The Shapley operator provides optimal stationary strategies for both players. In particular, the result holds 
for any signalling structure on past actions.
 The existence of $\limla v_\la$ was established by Bewley and Kohlberg \cite{BK76}, using
Tarski-Seidenberg elimination theorem. \\
\indent The purpose of this note is to provide a direct, self-contained proof for the existence of $\limla v_\la$. The key idea is to represent the asymptotic behaviour of a sequence of strategies by a simpler object.
Let $(x,y)\in\De(\I)^\Om\times \De(\J)^\Om$ be a pair of stationary strategies. 
Every time the state $\om\in \Om$ is reached the next state is distributed according to $q(\cdot|\om,x(\om),y(\om))$ and the stage payoff is $g(\om,x(\om),y(\om))$. Thus, 
the sequence of states $(\om_m)_m$ is a Markov chain with transition $Q=(q(\om'|\om,x(\om),y(\om))_{(\om,\om')\in \Om^2}$ and the stage payoffs can be described by  a vector 
$g=(g(\om,x(\om),y(\om))_{\om\in \Om}$.
For any initial state $\om$, the 
expected payoff induced by $(x,y)$ in $\Ga_\la(\om)$ is given by
$$\ga_\la(\om,x,y)=\sum\nolimits_{\om'\in\Om} t_\la(\om,\om')g(\om'),$$
where $t_\la(\om,\om')=\sum_{m\geq 1} \la (1-\la)^{m-1}Q^{m-1}(\om,\om')$ is the mean $\la$-discounted time spent in state $\om'$.

A key observation, due to Solan \cite{solan03}, is that $t_\la(\om,\om')$ can be written has a hitting time of an auxiliary Markov chain %in $\widehat{Q}$ 
whose transitions are in the set $\{0, \la, ((1-\la) Q(\om,\om'))_{(\om,\om')\in \Om^2}\}$. Thus, %for a fixed $y$, and 
using a classical result from Friedlin and Wentzell for finite Markov chains, one deduces that $t_\la(\om,\om')$ is a rational fraction in the variables $\la$ and $((1-\la) Q(\om,\om'))_{(\om,\om')\in \Om^2}$, and that both polynomials in the numerator and denominator have nonnegative coefficients and are of degree at most $|\Om|$. For a fixed $y$, a similar assertion is obtained for $\ga_\la(\om,x,y)$ as a function of the variables $\la$ and $((1-\la) x^i(\om))_{(\om,i)\in \Om\times \I}$. That is, $\ga_\la(\om,x,y)$ is a rational fraction in these variables. One can easily check that the monomials both in the numerator and denominator can then be written in the following form:
\begin{equation}\label{ds}
C(1-\la)^b\la^a \prod_{(\om,i)\in\Om\times \I} x^{i}(\om)^{A(\om,i)},
\end{equation}
where $C>0$ depends on $(y,\om)$ but not on $(x,\la)$, $a,b\in \{0,\dots, |\Om|\}$ and $A\in \{0,1\}^{\Om\times \I}$.
\subsection{The asymptotic payoff} % induced by a sequence of stationary strategies.}
\label{lim}
Consider now a sequence $(\la_n,x_n)_n$, where $\la_n\in (0,1]$ is a discount factor and $x_n\in \De(\I)^\Om$ is a stationary strategy, for all $n\in \NN$. %, and let $y\in The aim of this section is to construct a vector which summarizes all relevant information for computing the limit of
$\ga_{\la_n}(\om,x_n,y)$, as $n$ tends to infinity, for a fixed stationary strategy $y\in \De(\J)^\Om$. %$+\infty$.
\begin{definition}  A sequence $(\la_n,x_n)_n$ in $(0,1]\times \De(\I)^\Om$ is \emph{regular} if $\limn \la_n=0$ and if for any two monomials of the form \eqref{ds} their ratio converges in $[0,+\infty]$ as $n$ tends to infinity.\footnote{We use here the natural convention that 
$\frac{0}{0}=0^0=1$ and $0^\beta=0$, $0^{-\beta}=\frac{\beta}{0}=+\infty$, for all $\beta>0$.} 
\end{definition}
Regular sequences can be characterized by a vector. Indeed, introduce a finite set:
$$\M:=
\{(A,a)\ | \ A \in \{-1,0,1\}^{\Om\times \I}, \ a \in \{-|\Om|,\dots,0,\dots, |\Om|\}\}.$$
The sequence $(\la_n,x_n)_n$ is regular if for all $(A,a)\in \M$ the following limit
\begin{equation}\label{defl}
L[(\la_n,x_n)_n](A,a):=\limn  \la_n^{a} \prod_{(\om,i)\in \Om\times \I} x_n^i(\om)^{A(\om,i)}
\end{equation}
exists in $[0,+\infty]$.
The regularity of a sequence depends on the existence of finitely many  limits. Thus, for any family $(x_\la)_{\la\in(0,1]}$ of stationary strategies there exists $(\la_n)_n$ such that $(\la_n,x_{\la_n})_n$ is regular. 

\begin{proposition}\label{algebra} Let $y\in \De(\J)^\Om$ and $\om\in \Om$ be fixed. For any regular sequence 
$(\la_n,x_n)_n$ , $\limn \ga_{\la_n}(\om,x_n,y)$
 exists 
 %for every $\om\in\Om$ 
 and depends only on the vector $L[(\la_n,x_n)_n]$.
\end{proposition}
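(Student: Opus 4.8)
The plan is to exploit the explicit rational form of $\ga_\la(\om,x,y)$ recalled above. Fix $\om\in\Om$ and $y\in\De(\J)^\Om$, and write $\ga_\la(\om,x,y)=N(\la,x)/D(\la,x)$ where $N=\sum_{k=1}^p M_k$ and $D=\sum_{l=1}^q M'_l$ are finite sums of monomials of the form \eqref{ds}, the $M_k$ with positive constants $C_k$ and exponent data $(a_k,b_k,A_k)$, and likewise $M'_l$ with $C'_l$ and $(a'_l,b'_l,A'_l)$. I will use two elementary facts: $D(\la,x)>0$ for every $(\la,x)$, since $\ga_\la(\om,x,y)$ is a genuine real number; and $\ga_\la(\om,x,y)\in[0,1]$, since $\ga_\la(\om,x,y)=\sum_{\om'\in\Om}t_\la(\om,\om')g(\om')$ with $g(\om')\in[0,1]$ and $\sum_{\om'}t_\la(\om,\om')=\sum_{m\ge1}\la(1-\la)^{m-1}=1$. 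It is convenient to pass once and for all to a subsequence, still denoted $(\la_n,x_n)_n$, along which $\la_n\to0$ and the support $\{(\om',i):x_n^i(\om')>0\}$ is constant (there are finitely many possible supports, and $L[(\la_n,x_n)_n]$ is unaffected); along such a subsequence each monomial $M_k$, $M'_l$ is either identically $0$ or everywhere strictly positive, so every ratio considered below is an honest quotient of positive reals.

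The first step is a lemma on ratios of monomials: for any two of them, say $M$ with data $(C,a,b,A)$ and $\widetilde M$ with data $(\widetilde C,\widetilde a,\widetilde b,\widetilde A)$, the limit $\limn M(\la_n,x_n)/\widetilde M(\la_n,x_n)$ exists in $[0,+\infty]$ and is a function of $L[(\la_n,x_n)_n]$ only. Indeed
\[
\frac{M(\la_n,x_n)}{\widetilde M(\la_n,x_n)}=\frac{C}{\widetilde C}\,(1-\la_n)^{b-\widetilde b}\cdot\Big(\la_n^{\,a-\widetilde a}\!\!\prod_{(\om',i)\in\Om\times\I}\!\!x_n^i(\om')^{A(\om',i)-\widetilde A(\om',i)}\Big),
\]
and since $A,\widetilde A\in\{0,1\}^{\Om\times\I}$ we have $(A-\widetilde A,\,a-\widetilde a)\in\M$; hence by regularity the bracketed factor tends to $L[(\la_n,x_n)_n](A-\widetilde A,\,a-\widetilde a)$ while $(1-\la_n)^{b-\widetilde b}\to1$, so the limit of the ratio equals $\tfrac{C}{\widetilde C}\,L[(\la_n,x_n)_n](A-\widetilde A,\,a-\widetilde a)$.

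Next I would isolate a dominant denominator monomial. Declare $l\preceq l'$ when $\limn M'_l(\la_n,x_n)/M'_{l'}(\la_n,x_n)<+\infty$; using the lemma together with the identities $\limn M'_{l'}/M'_l=(\limn M'_l/M'_{l'})^{-1}$ and $\limn M'_l/M'_{l''}=(\limn M'_l/M'_{l'})(\limn M'_{l'}/M'_{l''})$ (the latter whenever both right-hand factors are finite), one checks that $\preceq$ is a total preorder on $\{1,\dots,q\}$, and that it depends only on $L[(\la_n,x_n)_n]$. Pick a $\preceq$-maximal index $l_0$; since $D>0$ forces some $M'_l$ to be everywhere positive, $M'_{l_0}$ is not identically $0$, so $\delta_l:=\limn M'_l(\la_n,x_n)/M'_{l_0}(\la_n,x_n)$ is finite for every $l$ (by maximality of $l_0$), with $\delta_{l_0}=1$. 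Dividing numerator and denominator of $\ga_{\la_n}(\om,x_n,y)$ by $M'_{l_0}(\la_n,x_n)$, the denominator converges to $D_\infty:=\sum_{l=1}^q\delta_l\in[1,+\infty)$. For the numerator I use $\ga_{\la_n}(\om,x_n,y)\in[0,1]$: since the denominator just obtained is bounded, the numerator $\sum_k M_k(\la_n,x_n)/M'_{l_0}(\la_n,x_n)$ is bounded, and being a finite sum of nonnegative terms each converging in $[0,+\infty]$, every term must in fact converge to a \emph{finite} limit $\al_k$ (a term tending to $+\infty$ would make the sum unbounded); hence the numerator converges to $N_\infty:=\sum_k\al_k<+\infty$, so $\limn\ga_{\la_n}(\om,x_n,y)=N_\infty/D_\infty$. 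Finally, $\delta_l=\tfrac{C'_l}{C'_{l_0}}L[(\la_n,x_n)_n](A'_l-A'_{l_0},\,a'_l-a'_{l_0})$ and $\al_k=\tfrac{C_k}{C'_{l_0}}L[(\la_n,x_n)_n](A_k-A'_{l_0},\,a_k-a'_{l_0})$ by the lemma, and $l_0$ was chosen via a preorder depending only on $L[(\la_n,x_n)_n]$, so $N_\infty/D_\infty$ depends only on $L[(\la_n,x_n)_n]$.

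The step I expect to require the most care is the very first reduction, namely the handling of strategies $x_n$ with zero coordinates: there the ratios of monomials must be read with the conventions $\tfrac00=0^0=1$ and $0^{-\beta}=+\infty$, the naive cancellation ``ratio of monomials $=$ monomial of the difference of exponents'' may fail, and the identity displayed in the lemma is no longer literally valid. Freezing the support of $x_n$ along a subsequence repairs this, but one must still verify that $L[(\la_n,x_n)_n]$ carries all the relevant information --- in particular that an identically-zero $M'_l$ contributes $\delta_l=\tfrac{C'_l}{C'_{l_0}}L[(\la_n,x_n)_n](A'_l-A'_{l_0},a'_l-a'_{l_0})=0$ to $D_\infty$, so that $D_\infty$ is a function of $L[(\la_n,x_n)_n]$ no matter which support the subsequence carries. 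To return from the subsequence to $(\la_n,x_n)_n$ itself, one invokes the standard fact that a sequence converges as soon as every one of its subsequences has a further subsequence converging to a common limit; the limit is common here because $L[(\la_n,x_n)_n]$ --- on which every such sub-limit depends by the above --- is unchanged under extraction.
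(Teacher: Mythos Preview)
Your proof is correct and follows essentially the same approach as the paper's: write $\ga_\la$ as a ratio of sums of monomials of the form \eqref{ds}, divide numerator and denominator by a dominant monomial, and use regularity to see that every surviving ratio converges to a finite quantity determined by $L$. The only notable differences are that you pick the dominant monomial among the \emph{denominator} terms and then invoke the a priori bound $\ga_\la\in[0,1]$ to force finiteness of the numerator limits (the paper instead orders \emph{all} monomials at once and picks a global maximum), and that you handle strategies with vanishing coordinates more carefully via the fixed-support subsequence extraction, a point the paper passes over in silence.
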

\begin{proof}  Let $(\la_n,x_n)_n$ be regular and let $L=L[(\la_n,x_n)_n]$.
 We have already seen
that the expected payoff induced by $(x_n,y)$ in $\Ga_{\la_n}(\om)$ can be written as a rational fraction whose
monomials are all of the form: 
\begin{equation}\label{ds2}
m_n:=C(1-\la_n)^b\la_n^a \prod_{(\om,i)\in\Om\times \I} x_n^{i}(\om)^{A(\om,i)},
\end{equation}
that the ratio of any two monomials $m_n$ and $m'_n$ converges as $n\to \infty$, and  that the limit is determined by $L$ (and the constants $C,C'>0$). Thus, one can use the vector $L$ to define an order relation in the set of the monomials in $\ga_{\la_n}(\om,x_n,y)$ as follows: $m_n\preceq m'_n$ if and only if
$\lim_{n\to \infty} m_n  /m'_n\in[0,+\infty)$.
The set is totally ordered. 
Dividing numerator and denominator by some maximal element $m^*_n$, and 
taking $n\to \infty$ we obtain that:
\begin{equation}\label{fra2}\limn \ga_{\la_n}(\om,x_n,y)=
 \frac{\sum_{(A,a)\in \M^+} C(A,a)L(A-A^*,a-a^*)}{\sum_{(A,a)\in \M^+} C'(A,a)  L(A-A^*,a-a^*)},
\end{equation}
%where $\M^+:=\{ (A,a)\ | \ A\in \{0,1\}^{\Om \times \I}, \ a\in \{0,\dots, |\Om|\}\}$, and where
where $\M^+:=\{ (A,a)\ | \ A\in \{0,1\}^{\Om \times \I}, \ a\in \{0,\dots, |\Om|\}\}$, and where the constants $C(A,a)$ and $C'(A,a)$ are nonnegative for all $(A,a)\in \M^+$.
%the constants $(C(A,a), C'(A,a))_{(A,a)\in \M^+}$ are nonnegative.
The maximality of $m^*$ ensures that $L(A-A^*,a-a^*)\in [0,+\infty)$, for all $(A,a)\in \M^+$ %such that either $\mu(A,a)\neq 0$ or $\nu(A,a)\neq 0$, 
and that
not all are $0$. %, and not all $0$.
The result follows. %The limit exists and depends only on $L$. 
 $\square$ \end{proof}

\subsection{Canonical strategies}
%%%%%%%%%%%%% Q^N %%%%%%%%%%%%%%%%%
%Let $\QQ_{+}^K=\{p/q \ | \ (p,q)\in \NN\times \NN^*, \ q\leq K\}$ be the set of nonnegative rational numbers, whose denominator is bounded by $K$.
%Let $N=|\Om||\I|^{|\Om||\I|}$ and define the following set:
% $$\XX=\left\{(\ccc,\mathbf{e})\in (\RR^*_+\times \QQ_+^N)^{\Om\times \I}\ | \ \forall \om \in \Om, \ \sum_{i\in \I, \ \mathbf{e}(\om,i)=0}\ccc(\om,i)=1\right\},$$ 
% where $\ccc$ is the vector of coefficients, and $\mathbf{e}$ the vector of exponents.
%%%%%%%%%%%%%%%%%%%%%%%%%%%%%%%%%%%%%%%%%%%%%%
% Introduce the set
% \begin{equation}\label{xx}\XX=\left\{(\ccc,\mathbf{e})\in (\RR^*_+\times \RR_+)^{\Om\times \I} \ | \ \forall \om \in \Om, \ \sum\nolimits_{i\in \I, \ \mathbf{e}(\om,i)=0}\ccc(\om,i)=1\right\},
%  \end{equation}
For any $\ccc=(\ccc(\om,i))$ and $\mathbf{e}=(\mathbf{e}(\om,i))$ in $\RR_+^{\Om\times I}$,  we define a family of stationary strategies $(\xx_\la)_\la$ as follows:
%$\xx=(\ccc,\mathbf{e})\in \XX$, 
\begin{equation}\label{def}\xx_\la^i(\om):=\frac{\ccc(\om,i)\la^{\mathbf{e}(\om,i)}}{\sum_{i'\in\I}\ccc(\om,i')\la^{\mathbf{e}(\om,i')}}, \quad \forall (\om,i)\in \Om\times\I, \ \forall \la\in(0,1]. %\sim_{\la\to 0} \ccc(\om,i)\la^{\mathbf{e}(\om,i)}.
\end{equation}
Assume, in addition, that
%that $c(\om,i)>0$ for all $(\om,i)$, and that  
$\sum\nolimits_{i\in \I, \ \mathbf{e}
(\om,i)=0}\ccc(\om,i)=1$ for all $\om$, so that
% This condition implies the existence of 
%some $x\in \De(\I)^\Om$ and $\ep_\la\in \RR_+^{\Om\times \I }$ such that $\xx_\la=x+\ep_\la$ and $\limla \ep_\la=0$. The former is the supporting stationary strategy and the latter can be interpreted as a perturbation. Moreover,
%Then %  every state $\om$. Then:
\begin{equation}\label{can}
\xx_\la^i(\om)\sim_{\la\to 0}\ccc(\om,i)\la^{\mathbf{e}(\om,i)}, \quad \forall (\om,i)\in \Om\times \I.
\end{equation}
The exponent determines the 
%ondition $\mathbf{e}(\om,i)=0$ and $\ccc(\om,i)>0$ means that at state $\om$, the action $i$ is played with positive probability. On the other hand, 
%The exponent $\mathbf{e}(\om,i)$ represents the 
order of magnitude of the probability of playing the action $i$ at state $\om$ asymptotically; the coefficient $\ccc(\om,i)$ its intensity. %Note that there exists $x\in \De(I)^\Om$ and $\al$ 
% , on the contrary, will be used to compare probabilities of the same order of magnitude.
%epresents the intensity.
\begin{definition} A family of strategies $(\xx_\la)_{\la\in(0,1]}$ is \emph{canonical} if it is induced by some $\xx=(\ccc,\mathbf{e})$ in the following set:
$$\XX=\{(\ccc,\mathbf{e})\in (\RR^*_+\times \RR_+)^{\Om\times \I} \ | \ \forall \om \in \Om, \ \sum\nolimits_{i\in \I, \ \mathbf{e}(\om,i)=0}\ccc(\om,i)=1\}.$$
Note that the coefficients are taken strictly positive.
% The vector $\ccc$ and  %may be interpreted as
% is the vector of coefficients,
% and $\mathbf{e}$ the vector of exponents.
%Note that, for any pair $(\om,i)$, $\xx_\la^i(\om)\sim_{\la\to 0}\ccc(\om,i)\la^{\mathbf{e}(\om,i)}$. Thus,
%$\ccc=(\ccc(\om,i))\in (\RR_+^*)^{\Om\times \I}$ is a vector of coefficients and %$\mathbf{e}=(\mathbf{e}(\om,i))\in (\RR_+)^{\Om\times \I}$ 
%a vector of exponents. 
\end{definition}
For all
$(A,a)\in \M$ and $\xx=(\ccc,\mathbf{e})\in \XX$ the following limit exists:
\begin{equation}\label{lx2}L_\xx(A,a):=\limla \la^a\prod\nolimits_{(\om,i)}\xx_\la^i(\om)^{A(\om,i)}.\end{equation}
Indeed, a direct consequence of \eqref{can} is that: $$L_\xx(A,a)=\limla \la^{a+\sum\nolimits_{(\om,i)}A(\om,i)\mathbf{e}(\om,i)} \prod\nolimits_{(\om,i)}\ccc(\om,i)^{A(\om,i)},$$
where 
%Thus, $L_\xx(A,a)$ is either $0$, $+\infty$ or in $(0,+\infty)$. 
%Note that, since 
$\prod\nolimits_{(\om,i)}\ccc(\om,i)^{A(\om,i)}>0$. Thus:
\begin{equation}
\label{lx} L_\xx(A,a)\in 
\begin{cases} \{0\},& \text{ iff }a+\sum\nolimits_{(\om,i)}A(\om,i)\mathbf{e}(\om,i)>0, \\
\{+\infty\},& \text{ iff }a+\sum\nolimits_{(\om,i)}A(\om,i)\mathbf{e}(\om,i)<0,\\
(0,+\infty),& \text{ iff }a+\sum\nolimits_{(\om,i)}A(\om,i)\mathbf{e}(\om,i)=0.
\end{cases}
\end{equation}
Thus, for any $\xx\in \XX$ and any vanishing sequence $(\la_n)_n$ of discount factors, the sequence $(\la_n,\xx_{\la_n})_n$ is regular.  %, with $\la_n\in (0,1]$, for all $n$, and $\limn \la_n=0$. 
Moreover, %\eqref{lx} implies that 
$L_\xx=L[(\la_n, \xx_{\la_n})_n]$ for any such sequence. % is independent of the sequence $(\la_n)_n$. 
\section{Main results}
\subsection{Representation of a regular sequence by a canonical strategy} % $(\la_n,x_n)_n$
Fix some regular sequence $(\la_n,x_n)_n$ throughout this section and let $L=L[(\la_n,x_n)_n]\in [0,+\infty]^\M$ the vector defined in \eqref{defl}. Notice that $L$ has many elementary properties:
%\textbf{Properties of $L$}:
\begin{itemize} %\vs
 \item [$(P1)$] $L(0,0)=1$ and, for all $(A,a)\neq 0$, $L(A,a)=+\infty$ if and only  if $L(-A,-a)=0$;
 \item [$(P2)$] For all $\mu \in \RR$, $L(0,\mu):=\lim\nolimits_{n\to \infty} \la_n^{\mu}=0 \Leftrightarrow \mu>0$ and 
 $L(0,\mu)\in (0,+\infty) \Leftrightarrow \mu=0$. In particular, $L(0,\mu)\in \{0,1,+\infty\}$ for all $\mu\in \RR$;
%\item [$(P3)$] For all $(\om,i)\in \Om\times \I$, $L((0,\dots, 1^{(\om,i)},\dots,0),0):=\limn x_n^i(\om)\in [0,1]$;
\item[$(P3)$] If $L(A,a)<+\infty$, $L(\mu A,\mu a):=\limn \la_n^{\mu a}\prod\nolimits_{(\om,i)} x_n^i(\om)^{\mu A(\om,i)}=L(A,a)^\mu $;
\item[$(P4)$] If $L(A,a)<+\infty$ and $L(B,b)<+\infty$, then 
$L(A+B,a+b)=L(A,a)L(B,b)$.
\end{itemize}
%Let $\LL_0:=\{ (A,a) \in \M \ | \ L(A,a)=0\}$ and % be the preimage of $0$, let
%$\LL_+:=\{(A,a) \in \M \ | \ L(A,a)\in (0,+\infty)\}$ be the preimages of $0$ and $(0,+\infty)$ respectively, and % =\{(A,a) \in \M, \ L(A,a)\in (0,+\infty)\}$ 
%\noindent \textbf{Notation:}
%Let $\LL=\LL_0\cup \LL_+ $. For any $\mu\in \RR^\LL$, let $\mu_{|\LL_0}:=
%(\mu(A,a))_{(A,a)\in \LL_0}$ be its restriction to $\LL_0$.

\begin{proposition}\label{elputolema}  %, finally, 
%Let $(\la_n,x_n)_n$ be regular and let $L:\M\to [0,+\infty]$ be the vector of its limits.  
There exists $\xx \in \XX$ such that $L_\xx=L$. 
 \end{proposition}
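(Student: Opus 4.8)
The plan is to reduce the claim to a finite linear feasibility problem for the exponent vector $\ee$, to solve that problem by linear‑programming duality — feeding the dual certificate back into the sequence $(\la_n,x_n)_n$ to reach a contradiction — and finally to build the coefficient vector $\ccc$ by solving a linear system of logarithms. First I would record the reductions. Taking $A=\ind_{(\om,i)}$ (the indicator of the coordinate $(\om,i)$) and $a=0$ in \eqref{defl} shows that $x_\infty^i(\om):=\limn x_n^i(\om)$ exists for every $(\om,i)$ with $\sum_i x_\infty^i(\om)=1$; put $S(\om):=\{i\in\I:x_\infty^i(\om)>0\}$. For $(A,a)\in\M$ let $\si(A,a)$ be $1$, $0$ or $-1$ according as $L(A,a)$ is $0$, lies in $(0,+\infty)$, or is $+\infty$, so $\si(-A,-a)=-\si(A,a)$ by $(P1)$. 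By \eqref{can} and \eqref{lx}, a canonical family $\xx=(\ccc,\ee)\in\XX$ satisfies $L_\xx=L$ if and only if
\begin{equation}
\mathrm{sign}\Big(a+\textstyle\sum_{(\om,i)}A(\om,i)\ee(\om,i)\Big)=\si(A,a)\qquad\text{for all }(A,a)\in\M,\tag{I}
\end{equation}
and $\prod_{(\om,i)}\ccc(\om,i)^{A(\om,i)}=L(A,a)$ for every $(A,a)\in F:=\{(A,a)\in\M:\si(A,a)=0\}$; call this last condition (II). By the $(P1)$ symmetry it is enough to impose (I) at the $(A,a)$ with $\si(A,a)\in\{0,1\}$; and (I) at $(\ind_{(\om,i)},0)$ forces $\{i:\ee(\om,i)=0\}=S(\om)$, so, once (I) holds, the normalization built into $\XX$ reads $\sum_{i\in S(\om)}\ccc(\om,i)=1$.

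The key step is to produce $\ee\in\RR_+^{\Om\times\I}$ satisfying (I): this is a finite system of strict linear inequalities (the $\si=1$ rows), equalities (the $\si=0$ rows) and the sign constraints $\ee\ge0$. Were it infeasible, Motzkin's transposition theorem would provide nonnegative scalars $\mu_{(A,a)}$ (over $\si=1$), free scalars $\rho_{(A,a)}$ (over $\si=0$) and nonnegative $\nu_{(\om,i)}$ with $\sum_{\si=0}\rho_{(A,a)}A=\sum_{\si=1}\mu_{(A,a)}A+\sum_{(\om,i)}\nu_{(\om,i)}\ind_{(\om,i)}$ and $\ga:=\sum_{\si=1}\mu_{(A,a)}a-\sum_{\si=0}\rho_{(A,a)}a\le0$, where moreover some $\mu_{(A,a)}>0$ or else $\ga<0$. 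I would then substitute the sequence: set $W_n:=\prod_{\si=1}\big(\la_n^a\prod_{(\om,i)}x_n^i(\om)^{A(\om,i)}\big)^{\mu_{(A,a)}}\prod_{\si=0}\big(\la_n^a\prod_{(\om,i)}x_n^i(\om)^{A(\om,i)}\big)^{-\rho_{(A,a)}}\prod_{(\om,i)}x_n^i(\om)^{\nu_{(\om,i)}}$. The vector identity makes every $x_n^i(\om)$ cancel, so $W_n=\la_n^{\ga}$; since $\la_n\to0$ and $\ga\le0$ one gets $W_n\ge1$ for $n$ large, with $W_n\to+\infty$ if $\ga<0$. On the other hand, letting $n\to\infty$ factor by factor: the $\si=0$ factors tend to numbers in $(0,+\infty)$, the factor $\prod_{(\om,i)}x_n^i(\om)^{\nu_{(\om,i)}}$ stays in $[0,1]$, and each $\si=1$ factor tends to $0^{\mu_{(A,a)}}$, which is $0$ as soon as $\mu_{(A,a)}>0$. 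Hence: if some $\mu_{(A,a)}>0$, then $\lim W_n=0$, contradicting $W_n\ge1$; and if all $\mu_{(A,a)}=0$, then $\ga<0$ so $W_n\to+\infty$, contradicting that $\lim W_n$ is a finite product of positive reals times a number in $[0,1]$. Either way we reach a contradiction, so (I) is solvable; fix such an $\ee$.

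It remains to construct $\ccc\in(\RR_+^*)^{\Om\times\I}$ meeting (II) and the normalization. I would set $\ccc(\om,i):=x_\infty^i(\om)$ for $i\in S(\om)$ — positive and summing to $1$ over $S(\om)$, which takes care of the normalization — leaving $(\log\ccc(\om,i))_{i\notin S(\om)}$ as unknowns; written logarithmically, (II) is then an inhomogeneous linear system in these unknowns. To check it is consistent, take any real relation $\sum_{(A,a)\in F}\la_{(A,a)}A(\om,i)=0$ valid at every $(\om,i)\notin\bigcup_\om S(\om)$; raising the quantities $\la_n^a\prod_{(\om,i)}x_n^i(\om)^{A(\om,i)}$ to these powers, multiplying, and letting $n\to\infty$ exactly as above — the $\big(\bigcup_\om S(\om)\big)$‑block of the product converges to a positive number since there $x_n^i(\om)\to\ccc(\om,i)>0$, while the whole product converges to $\prod_{F}L(A,a)^{\la_{(A,a)}}\in(0,+\infty)$ — forces $\sum_{F}\la_{(A,a)}a=0$ and identifies $\prod_{F}L(A,a)^{\la_{(A,a)}}$ with $\prod_{i\in S(\om)}\ccc(\om,i)^{(\sum_F\la A)(\om,i)}$; in logarithmic form this is precisely the compatibility condition needed for the system to be solvable. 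Any solution yields $\ccc>0$, and then $\xx=(\ccc,\ee)\in\XX$ satisfies $L_\xx=L$, as wanted.

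I expect the second step to be the main obstacle. The crux is that the $\si=0$ constraints in (I) must hold with exact equality, so one cannot simply take $\ee$ to be a limit point of the empirical exponents $\log x_n^i(\om)/\log\la_n$: such a limit satisfies the strict inequalities only weakly and may land on the wrong face of the feasible polyhedron. Passing through the dual is exactly what turns the delicate exact‑equality requirement into the concrete assertion that no contradictory monomial identity holds along $(\la_n,x_n)_n$. Making the argument fully rigorous also calls for some care with the conventions $0^0=1$ and $0\cdot(+\infty)$ and with the fact that $x_n^i(\om)$ may vanish for some $n$ (so a quantity $\la_n^a\prod_{(\om,i)}x_n^i(\om)^{A(\om,i)}$ may be $0$ or $+\infty$ at a given stage, although $L(A,a)=0$, resp. $L(A,a)\in(0,+\infty)$, forces it to be finite eventually), but none of this affects the structure of the proof.
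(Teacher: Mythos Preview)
Your proof is correct and follows essentially the same route as the paper: reduce $L_\xx=L$ to a mixed strict/equality linear system for the exponents $\ee$ plus a linear system in the log-coefficients $\log\ccc$, then show each is feasible by proving its theorem-of-the-alternative dual is inconsistent via substitution back into the sequence $(\la_n,x_n)_n$ --- the paper phrases this substitution abstractly through properties $(P1)$--$(P4)$ rather than building your explicit product $W_n$, and recovers $\ee\ge0$ a posteriori instead of putting it in the primal, but these are cosmetic differences. Your explicit handling of the normalization via $S(\om)$ and the forced values $\ccc(\om,i)=x_\infty^i(\om)$ for $i\in S(\om)$ is left implicit in the paper (it falls out of \eqref{ns2} at $A=\ind_{(\om,i)}$, $a=0$) but is the same observation.
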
   
%%%%%%%%%%%%%%%%%%%%%
%%%%%%%%%%%%%%%%%%%%%% 
\begin{proof} 
Note that $\prod\nolimits_{(\om,i)}\ccc(\om,i)^{A(\om,i)}>0$ for any $A\in \{-1,0,1\}^{\Om\times I}$. Thus, from \eqref{lx} and $(P1)$ one deduces the following necessary and sufficient conditions on the coefficients and the exponents $(\ccc,\mathbf{e})$ of $\xx$ 
%the pair
%$(\ccc,\mathbf{e})$
 for having $L_\xx=L$:
\begin{eqnarray}\label{ns1}
\sum\nolimits_{(\om,i)}A(\om,i)\mathbf{e}(\om,i)+a>0,& \forall (A,a)\in \M \text{ s.t. } L(A,a)=0
% \in \LL_0
,\\ \label{ns3}
\sum\nolimits_{(\om,i)}A(\om,i)\mathbf{e}(\om,i)+a=0,&\forall (A,a)\in \M \text{ s.t. }L(A,a)\in(0,+\infty),\\
% \in \LL_+, \\
 \label{ns2}
\prod\nolimits_{(\om,i)}\ccc(\om,i)^{A(\om,i)}=L(A,a),& \forall (A,a)\in \M \text{ s.t. }L(A,a)\in(0,+\infty). %\\=0\in \LL_+.
%\sum\nolimits_{(\om,i)}r(\om,i)\ln(\ccc(\om,i))=\ln(L(r)), \ \forall r\in \M_{(0,\infty)}.
\end{eqnarray}
%Notice that, in particular, $\mathbf{e}\geq 0$ and $\sum_{i\in \I,\ \mathbf{e}(\om,i)=0}\ccc(\om,i)=1$. \\
%%%%%%%%%%%%%%% Details %%%%%%%%%%%
% To see this, let
% $\de_{(\om,i)}$ denote the $(\om,i)$-th vector in the canonical basis of $\RR^{\Om\times \I}$. Then $(0, \de_{(\om,i)})\in \M_{[0,\infty)}$ because
% $x_n^i(\om)\in [0,1]$, $\forall n$. Therefore, $\mathbf{e}(\om,i)\geq 0$. On the other hand, $\sum_{i\in \I}x_n^i(\om)=1$, 
% $\forall \om\in \Om$ $\forall n$, so that $\sum_{i\in \I}L(0,\de_{(\om,i)})=\sum_{i\in \I}\ccc(\om,i)=1$.\\
%\textbf{Notation:} For any $r\in \M$, let $r=-r(\la)\in \RR$ and $R=(r(\om,i))_{(\om,i)}\in \RR^{\Om\times \I}$. We will write $r=(-r,R)$.\\
\textbf{Notation:} Let $\LL_0:=\{ (A,a) \in \M \ | \ L(A,a)=0\}$ and $\LL_+:=\{(A,a) \in \M \ | \ L(A,a)\in (0,+\infty)\}$. Put $\LL:=\LL_0\cup \LL_+ $.\\
%The vector $L$ induces a partition of $\M$ into three disjoint subsets:
%$$\LL_0:=\{ (A,a) \in \M \ | \ L(A,a)=0\}, \quad \LL_+:=\{(A,a) \in \M \ | \ L(A,a)\in (0,+\infty)\}$$
%and $\{ (A,a) \in \M \ | \ L(A,a)=+\infty\}$. The latter is, by $(P1)$, equal to $-\LL_0$.
\textbf{Solving for the exponents.}
Let us prove that the system \eqref{ns1}-\eqref{ns3}
%, which can be written as $$\{A_{r}\mathbf{e} > -r,\ \forall r\in \LL_0; \ R \mathbf{e}=-r, \ \forall r \in \LL_+ \}$$ 
has a solution.
One and only one of the systems \eqref{ns1}-\eqref{ns3} and \eqref{a3}-\eqref{a4}-\eqref{a5} is consistent
(see Mertens, Sorin and Zamir \cite{MSZ94}, part A, page 28):
\begin{eqnarray}% \quad \mu \in \RR^{\LL}, 
\sum\nolimits_{(A,a)\in \LL} \mu(A,a) A=0, \quad \mu_{|\LL_0} 
%\footnote{$\mu_{|\LL_0}:=(\mu(A,a))_{(A,a)\in \LL_0}$ denotes the restriction of $\mu$ to $\LL_0$.} 
\geq 0 , \label{a3}\\ %(A,a) \geq 0, \ \text{ for all } (A,a)\in \LL_0, \label{a3}\\
-\sum\nolimits_{(A,a)\in \LL} \mu(A,a)   a \geq 0,\label{a4}\\
-\sum\nolimits_{(A,a) \in \LL} \mu(A,a)  a+\sum\nolimits_{(A,a) \in \LL_0} \mu(A,a) >0,\label{a5}
\end{eqnarray}
Let us prove that the system \eqref{a3}-\eqref{a4}-\eqref{a5}, with unknowns $\mu=(\mu(A,a))_{(A,a)} \in \RR^{\LL}$, is inconsistent. 
In \eqref{a3}, $\mu_{|\LL_0}:=(\mu(A,a))_{(A,a)\in \LL_0}$ denotes the restriction of $\mu$ to $\LL_0$.
Assume \eqref{a3}. On the one
 hand, by $(P3)$-$(P4)$, for all $\mu\in \RR^{\LL}$:
% that, $\forall \mu\in \RR^{\Om\times \I}$: %yield:
\begin{eqnarray}\label{b1}
 \prod_{(A,a) \in\LL_+ }L(A,a)^{\mu(A,a)}
=L\left(\sum\nolimits_{(A,a) \in \LL_+ } \mu(A,a) A,\sum\nolimits_{(A,a) \in \LL_+ } \mu(A,a) a\right)
%=\prod\nolimits_{r \in\LL_+ }L(-r,R)^{\mu_r}
\in (0,+\infty).
\end{eqnarray}
%for any $\mu\in \RR^{\Om\times \I}$.
On the other hand, %Recall that, by convention, $0^0=1$ and $0^\be=0$, $\forall \be>0$. 
by $(P3)$-$(P4)$, for all $\mu\in \RR^\LL$ such that $\mu_{|\LL_0}\geq 0$ % (i.e. $\mu(A,a)\geq 0$, $\forall (A,a) \in \LL_0$) 
one has:
\begin{equation}\label{b2}
\prod_{(A,a) \in\LL_0}L(A,a)^{\mu(A,a)}=L\bigg(\sum_{(A,a) \in \LL_0} \mu(A,a) A,\sum_{(A,a)
  \in \LL_0} \mu(A,a) a\bigg) %\in \{0,1\},
 =\begin{cases}
 1 & \text{ if } \mu_{|\LL_0}=0,\\ %\exists (A_0,a_0)\in\LL_0, \mu(A_0,a_0)>0, \\%, \text{ if }\mu_{|\LL_0}\equiv 0,\\
 0  & \text{ otherwise.} %, \text{ otherwise}.
 \end{cases}
 \end{equation}
%depending on whether there exists $(A_0,a_0)\in \LL_0$ with $\mu(A_0,a_0)>0$, or $\mu(A,a)=0$, for all $(A,a) \in\LL_0$. %, respectively.
Multiplying \eqref{b1} and \eqref{b2} yields, by assumption \eqref{a3} : %, i.e. $\sum\nolimits_{(A,a)\in \LL} \mu(A,a)A=0$:
% and $(ii)$: %$L\left(-\sum\nolimits_{r \in \LL }\mu_r r,0\right)=0$.
%and using  \eqref{b0} yields: 
\begin{equation}\label{b3}
L\left(0,\sum\nolimits_{(A,a) \in \LL }\mu(A,a) a\right)
\in\begin{cases}
(0,+\infty) & \text{ if } \mu_{|\LL_0}= 0,\\ % \quad \text{ if }\mu(A_0,a_0)>0 \text{ for some } (A_0,a_0)\in\LL_0,\\
\{0\}& \text{ otherwise.} %, \text{ otherwise}. \quad \text{ if }\mu(A,a)=0, \ \forall (A,a) \in\LL_0.
\end{cases}
 \end{equation}
By $(P2)$, the first case implies $\sum\nolimits_{(A,a) \in \LL }\mu(A,a) a=0$, which contradicts \eqref{a5}, and the second case implies
 $\sum\nolimits_{(A,a) \in \LL }\mu(A,a) a>0$, which contradicts \eqref{a4}. 
% If, on the other hand, $\exists r_0\in \LL_0$ such that $\mu_{r_0}>0$, \eqref{b3} implies $\sum\nolimits_{r \in \M_{[0,\infty)}} \mu_r r>0$, and \eqref{a4} fails.
The system \eqref{a3}-\eqref{a4}-\eqref{a5} being inconsistent, the existence of a solution to \eqref{ns1}-\eqref{ns3} in $\RR^{\Om\times \I}$ follows. The boundedness of $x_n(\om,i)$ implies that $L((0,\dots, 1^{(\om,i)},\dots,0),0)<+\infty$, so that $\mathbf{e}(\om,i)\geq 0$ by \eqref{ns1} and \eqref{ns3}.
% Finaly $\mathbf{e}(\om,i)
%%%%%%%%%%%%%%%%%%% Q_N %%%%%%%%%%%%%%%%%%%%%
% By $(P3)$, $((0,\dots, 1^{(\om,i)},\dots,0),0)\in \LL_0\cup \LL_+$. Thus, $\mathbf{e}(\om,i)=(0,\dots, 1^{(\om,i)},\dots,0)\mathbf{e} \geq 0$, by 
%\eqref{ns1} and \eqref{ns3}.\\
% The fact that $A\in \{-1,0,1\}^{\Om\times \I}$, $\forall (A,a)\in \LL$ implies the existence of some solution in $\QQ_+^N$.\\
%%%%%%%%%%%%%%%%%%% Q_N %%%%%%%%%%%%%%%%%%%%%

 \noindent \textbf{Solving for the coefficients.} 
Taking the logarithm in \eqref{ns2} yields: % yields a linear system for $\ddd=(\ln \ccc(\om,i))_{(\om,i)}$:
\begin{equation}\label{r1}
\sum \nolimits_{(\om,i)}A(\om,i)\ln \ccc(\om,i)=\ln(L(A,a)),\quad  \forall (A,a)\in \LL_+, 
\end{equation}
which is a linear system in $\ddd=(\ln \ccc(\om,i))_{(\om,i)}\in \RR^{\Om\times \I}$.
 As before, one and only one of the systems \eqref{r1} and \eqref{f1}
is consistent:
\begin{eqnarray}\label{f1}\sum\nolimits_{(A,a)  \in \LL_+ } \mu(A,a)  A =0,\quad \sum\nolimits_{(A,a)  \in \LL_+ } \mu(A,a)
 \ln(L(A,a)) >0.
%=(\mu(A,a))_{(A,a)\in \LL_+} \in \RR^{\LL_+}.
\end{eqnarray}
% with unkowns are $\mu=(\mu(A,a))_{(A,a)} \in \RR^{\LL}$, is incompatible. Assume \eqref{a3}. On the one hand, 
Let us prove that the system \eqref{f1}, with unkowns $\mu=(\mu(A,a))_{(A,a)} \in \RR^{\LL_+}$, is inconsistent.
%The same argument as for the exponents shows that this system. 
%which, again, has no solution. The proof is similar to that of the exponents and is ommitted. 
%%%%%%%%%%%%%%%%%% solution for coefficients %%%%%%%%%%%
 Suppose that $\sum_{(A,a)  \in \LL_+ } \mu(A,a)  A =0$. 
% Then, as in \eqref{b1}: $L(0,\sum_{r \in \LL_+ }\mu_r r) \in (0,+\infty)$, $\forall \mu\in \RR^\M$, which implies $\sum_{r \in \M_r}\mu_r r=0$, $\forall \mu\in \RR^\M$.
 Then, by $(P3)$-$(P4)$: %using properties $(i)$-$(ii)$ of $L$:
$$\prod\nolimits_{(A,a) \in \LL_+ } L(A,a)^{\mu(A,a) }=
L\left(0,\sum\nolimits_{(A,a) \in \LL_+ }\mu(A,a) a \right)\in (0,+\infty).$$ 
By $(P2)$, this implies $\sum\nolimits_{(A,a) \in \LL_+ }\mu(A,a) a =0$ and, a fortiori, $\prod\nolimits_{(A,a) \in \LL_+ } L(A,a)^{\mu(A,a) }=1$, 
so that \eqref{f1} fails. Consequently, there exists $\ccc=(\exp(\ddd(\om,i))\in (\RR^*_+)^{\Om\times I}$ satisfying \eqref{ns2}.
   \end{proof}
 %with $\ccc>0$, $\mathbf{e}\geq 0$ and $L_\xx=L$.
\subsection{Convergence of the discounted values} %Existence of $\limla v_\la$.}
\begin{theoreme}\label{main} %The limit, as $\la$ tends to $0$, of $v_\la$ exists. %\to 0$The sequence $(v_\la)$ converges, as $\la$ tends to $0$.
%The limit $\limla_\lav_\la$ exists.
The limit of $(v_\la)_\la$, as $\la$ tends to $0$, exists. % exists\limla v_\la$ exists.
 Moreover, there exists $\xx\in \XX$ such that $(\xx_\la)_\la$ is \emph{asymptotically optimal}, i.e. for all $\ep>0$, there exists $\la_0\in (0,1]$ such that:
\[ \ga_\la(\om, \xx_\la,y)\geq \lim\nolimits_{\la\to 0} v_\la(\om)-\ep,\quad \forall \om\in \Om,\ \forall y\in \De(\J)^\Om, \ \forall \la\in (0,\la_0).\]
\end{theoreme}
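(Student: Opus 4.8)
The plan is to combine Propositions \ref{algebra} and \ref{elputolema} with a minimax/compactness argument over the finite-dimensional set $\XX$. First I would recall that for each $\la\in(0,1]$ and each state $\om$, the value $v_\la(\om)=\mathrm{val}_{(x,y)}\ga_\la(\om,x,y)$ is attained at stationary strategies (Shapley's theorem), so there exist stationary optimal strategies $x_\la$ for player $1$. Given any vanishing sequence of discount factors, by the remark following the definition of regularity I can extract a subsequence along which $(\la_n,x_{\la_n})_n$ is regular; let $L$ be its associated vector and, invoking Proposition \ref{elputolema}, pick $\xx\in\XX$ with $L_\xx=L$. The point is that this single canonical strategy $\xx$ simultaneously dominates the limit behaviour of $x_{\la_n}$ against \emph{every} opponent strategy $y$: by Proposition \ref{algebra}, since $L_\xx=L[(\la_n,x_{\la_n})_n]$, we have $\lim_n \ga_{\la_n}(\om,\xx_{\la_n},y)=\lim_n \ga_{\la_n}(\om,x_{\la_n},y)$ for every fixed $y\in\De(\J)^\Om$ and every $\om$.

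Next I would use this to pin down $\lim_n v_{\la_n}(\om)$. On the one hand, optimality of $x_{\la_n}$ gives $v_{\la_n}(\om)\le \ga_{\la_n}(\om,x_{\la_n},y)$ for all $y$, hence $v_{\la_n}(\om)\le \inf_y \ga_{\la_n}(\om,x_{\la_n},y)$; passing to the limit along the subsequence and using the previous paragraph, $\limsup_n v_{\la_n}(\om)\le \inf_y \lim_n\ga_{\la_n}(\om,\xx_{\la_n},y)$ — here one must be slightly careful to interchange $\inf_y$ and $\lim_n$, which is legitimate because by Proposition \ref{algebra} the limit payoff $\bar\ga(\om,y):=\lim_n\ga_{\la_n}(\om,\xx_{\la_n},y)$ is a fixed rational expression in finitely many limits, so the $\inf$ over the compact polytope $\De(\J)^\Om$ is attained and the convergence $\ga_{\la_n}(\om,\xx_{\la_n},\cdot)\to\bar\ga(\om,\cdot)$ can be arranged to be uniform (or one simply plugs in a minimiser $y^*$ of $\bar\ga(\om,\cdot)$ and notes $\ga_{\la_n}(\om,\xx_{\la_n},y^*)\to\bar\ga(\om,y^*)=\min_y\bar\ga(\om,y)$, while for the other inequality one uses that $\ga_{\la_n}(\om,\xx_{\la_n},y)\ge v_{\la_n}(\om)$ fails a priori — see below). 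Symmetrically, applying the same construction to player $2$ yields a canonical $\yy\in\De(\J)^\Om$-valued family with $\liminf_n v_{\la_n}(\om)\ge \sup_x \lim_n \ga_{\la_n}(\om,x,\yy_{\la_n})$. Combining, for every limit point of $(v_\la)_\la$ along any regular subsequence one gets $\lim v_{\la_n}(\om)$ squeezed between $\sup_x\min_y$ and $\max_y\inf_x$ of limit payoff functions that, by the same rational-expression structure, coincide — giving a value $w(\om)$ independent of the chosen regular subsequence. Since every vanishing sequence has a regular subsequence and all of them produce the same limit $w$, the full limit $\lim_{\la\to 0}v_\la=w$ exists.

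For the asymptotic optimality clause, I would go back to the canonical $\xx$ produced for player $1$ above and argue that it works not just along the extracted subsequence but for all small $\la$. Suppose not: then there is $\ep>0$ and a vanishing sequence $\la_n$, states $\om_n$, and opponent strategies $y_n$ with $\ga_{\la_n}(\om_n,\xx_{\la_n},y_n)<w(\om_n)-\ep$. Since $\Om$ is finite I may assume $\om_n\equiv\om$, and since $\De(\J)^\Om$ is compact I may pass to a further subsequence along which $y_n\to y_\infty$ and along which $(\la_n,\xx_{\la_n})_n$ is regular with vector $L_\xx$; then $\ga_{\la_n}(\om,\xx_{\la_n},y_n)\to \bar\ga(\om,y_\infty)$ (using joint continuity of $\ga_\la$ in $(y,\la)$ near $\la=0$ together with Proposition \ref{algebra}), so $\bar\ga(\om,y_\infty)\le w(\om)-\ep$. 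But the first part showed $\bar\ga(\om,y)=\lim_n\ga_{\la_n}(\om,\xx_{\la_n},y)\ge \liminf_n v_{\la_n}(\om)=w(\om)$ for every $y$ — using $\ga_{\la_n}(\om,\xx_{\la_n},y)\ge$ nothing a priori, so in fact this lower bound is the delicate point and is exactly what forces $\xx$ to be chosen as the representative of an \emph{optimal} sequence $x_{\la_n}$, for which $\ga_{\la_n}(\om,x_{\la_n},y)\ge v_{\la_n}(\om)$ does \emph{not} hold either; rather one uses $\min_y\ga_{\la_n}(\om,x_{\la_n},y)=v_{\la_n}(\om)$ and monotonicity to get $\ga_{\la_n}(\om,x_{\la_n},y)\ge v_{\la_n}(\om)$ for the minimising $y$ only, hence $\bar\ga(\om,\cdot)\ge w(\om)$ at its minimiser, i.e. $\min_y\bar\ga(\om,y)\ge w(\om)$, contradicting $\bar\ga(\om,y_\infty)\le w(\om)-\ep$. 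The main obstacle, which this sketch flags repeatedly, is the legitimacy of interchanging the $\inf$/$\sup$ over opponent strategies with the limit $n\to\infty$: the resolution is that the limit payoff is a fixed continuous (rational) function of $y$ given by Proposition \ref{algebra}, so the extremum is attained and one may work with an explicit extremiser, and the finiteness of $\Om$ handles the state variable.
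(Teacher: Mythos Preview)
Your sketch circles the right ideas---extract a regular subsequence from optimal strategies, represent it by a canonical $\xx$ via Proposition~\ref{elputolema}, transfer limits via Proposition~\ref{algebra}---but it has two genuine gaps that you yourself flag without resolving.

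First, the interchange of $\inf_{y\in\De(\J)^\Om}$ with $\lim_n$ is the crux, and your proposed fixes (uniform convergence, joint continuity of $\ga_\la$ in $(y,\la)$ near $\la=0$) are not available: the limit payoff is a rational function of the \emph{monomials} in $y$, not of $y$ itself, and there is no joint continuity at $\la=0$ in general. The paper sidesteps this entirely by observing that against any stationary strategy of player~1, player~2 has a \emph{pure} stationary best reply; hence it suffices to take the minimum over the \emph{finite} set $\J^\Om$. Proposition~\ref{algebra} then gives, for each of these finitely many $j$, a full limit $\lim_{\la\to 0}\ga_\la(\om,\xx_\la,j)$ (not just along the subsequence, since any vanishing $(\la_n)$ makes $(\la_n,\xx_{\la_n})$ regular with the same vector $L_\xx$), and a finite minimum commutes trivially with the limit.

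Second, your route to uniqueness of the limit---building a symmetric $\yy$ for player~2 and appealing to a minimax equality for some ``limit game''---is neither needed nor justified as stated. The paper's argument is one-sided and much shorter: choose the initial subsequence so that $\lim_n v_{\la_n}(\om)=\limsup_{\la\to 0}v_\la(\om)$. Then the chain
\[
\lim_{\la\to 0}\ga_\la(\om,\xx_\la,j)=\lim_n\ga_{\la_n}(\om,x_{\la_n},j)\ge \limsup_{\la\to 0}v_\la(\om)
\]
holds for every $j\in\J^\Om$, so for all small $\la$ one has $v_\la(\om)\ge\min_{j}\ga_\la(\om,\xx_\la,j)\ge\limsup_{\la\to 0}v_\la(\om)-\ep$. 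Taking $\liminf$ finishes both the convergence and the asymptotic optimality in one stroke; no player-2 construction, no comparison of limit payoff functions from different subsequences, and no continuity claims in $y$ are required.
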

\begin{proof}
 Let $\om\in \Om$ be fixed. Let $(x_\la)_{\la>0}$ be a family of optimal stationary strategies in $(\Ga_\la(\om))_{\la>0}$ and let $(\la_n)_{n}$ be a sequence of discount factors such that
$\limn v_{\la_n}(\om)=\limsup_{\la\to 0}v_\la(\om)$. The optimality of $x_{\la_n}$ implies that
$\ga_{\la_n}(\om,x_{\la_n},j)\geq v_{\la_n}(\om)$, for all $j\in \J^\Om$. Indeed, against a stationary strategy of 
player $1$, player $2$ faces a Markov decision process. Thus, player $2$ has a pure stationary best reply. 
%best reply $j_n=(j_n(\om))\in J^\Om$ to $x_{\la_n}$.  
Up to some subsequence, $(\la_n,x_{\la_n})_n$ is regular. % and $j_n$ is constant. 
%and let $L[(\la_n,x_{\la_n})_n]$ be the vector of its limits.
By Proposition \ref{elputolema}, there exists $\xx\in \XX$ such that $L_\xx=L[(\la_n,x_{\la_n})_n]$. Thus, by Proposition \ref{algebra}, 
$$\lim\nolimits_{n\to \infty} \ga_{\la_n}(\om,x_{\la_n},j)=\lim\nolimits_{n\to \infty} \ga_{\la_n}(\om,\xx_{\la_n},j),
\quad \forall j\in \J^\Om.$$ On the other hand, the limit
$\lim\nolimits_{\la\to0} \ga_\la(\om,\xx_\la,j)$ exists. Consequently:% so that 
%$\lim\nolimits_{\la\to0} \ga_\la(\om,\xx_\la,y)$ exists so that 
%and, consequently: 
\begin{equation}\label{asymopt}
\lim\nolimits_{\la\to0} \ga_\la(\om,\xx_\la,j)=
%\lim\nolimits_{n\to \infty} \ga_{\la_n}(\om,\xx_{\la_n},y)=
\lim\nolimits_{n\to \infty} \ga_{\la_n}(\om,x_{\la_n},j)
\geq \limsup\nolimits_{\la\to 0}v_\la(\om),\quad \forall j\in \J^\Om.
\end{equation}
%where the first equality  comes from the existence of $\lim\nolimits_{\la\to0} \ga_\la(\om,\xx_\la,y)$. % exists, so that it does not depend on the subsequence $(\la_n)_n$. 
%The set of pure actions of player $2$ being finite, 
It follows that for all $\ep>0$ there exists $\la_0\in (0,1]$ such that:
% $\forall \la\in (0,\la_0)$, 
\begin{equation}\label{ult}\min_{j \in \J^\Om} \ga_\la(\om,\xx_\la,j)\geq \limsup\nolimits_{\la\to 0}v_\la(\om)-\ep,\quad \forall \la\in (0,\la_0).\end{equation} %\forall y\in \De(\J)^\Om,  \ 
The latter implies that %Thus, player $1$ guarantees Again, player $2$ has a stationary best reply to $\xx_\la$ in $\Ga_\la(\om)$, so that \eqref{ult} implies
 $v_\la(\om)\geq \limsup\nolimits_{\la\to 0}v_\la(\om)-\ep$, for all $\la\in (0,\la_0)$, and the existence of $\limla v_\la$ follows by taking the $\liminf$. The canonical strategy $\xx$ has the desired property.
   \end{proof}

\subsection{Concluding remarks}
\begin{itemize}
 \item[$(1)$] Consider an infinitely repeated stochastic game  where the past actions are observed. 
The existence of the uniform value is due to Mertens and Neyman \cite{MN81} and relies on the following result:
 \begin{theoreme}\label{mn} Let $f:(0,1)\to \RR^\Om$ be a function such that: 
\begin{itemize}%\vs 
\item[$(a)$]  $\|f_\la-f_{\la'}\|\leq \int_\la^{\la'}\varphi(x)dx$, for all $0<\la<\la'<1$ and for some $\varphi\in L^1\big((0,1],\RR_+\big)$; 
\item[$(b)$] There exists $\la_0>0$ such that $\Phi(\la,f_\la)\geq f_\la$, for all $\la\in (0,\la_0)$.\footnote{$\Phi$ is the Shapley operator, defined in \eqref{shapley}.}
\end{itemize}
%\vs 
Then, player $1$ can guarantee $\limla f_\la$ in $\Ga_\infty$.
\end{theoreme}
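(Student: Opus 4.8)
The plan is to prove Theorem~\ref{mn} by exhibiting an explicit strategy for player~1 in the undiscounted game $\Ga_\infty$ that guarantees $\limla f_\la$, using the family $(f_\la)_\la$ itself as a ``continuously updated potential.'' The idea, following Mertens and Neyman, is that player~1 fixes a target level $w=\limla f_\la(\om_1)-\ep$ (the existence of the limit follows from condition $(a)$, which forces $f$ to be of bounded variation near $0$ and hence Cauchy) and then plays, at each stage $m$, a stationary optimal strategy in the Shapley operator $\Phi(\la_m,\cdot)$ evaluated at $f_{\la_m}$, where $\la_m$ is a discount factor that player~1 adjusts from stage to stage as a function of the realized history. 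First I would set up the auxiliary process: given a history $h_m=(\om_1,i_1,j_1,\dots,\om_m)$, define $\la_m$ implicitly so that the accumulated ``discounted weight'' stays calibrated, e.g. via a recursion of the form $\la_{m+1}^{-1}=\la_m^{-1}+1-\delta_m$ where $\delta_m$ measures how far the realized one-stage outcome fell below the value predicted by $\Phi(\la_m,f_{\la_m})$; roughly, player~1 ``spends'' discount when behind and ``saves'' it when ahead.

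The core of the argument is a supermartingale estimate. Let $s_\la(\om)$ denote player~1's stationary optimal action in $\Phi(\la,f_\la)(\om)$, and along a play let player~1 use $s_{\la_m}(\om_m)$ at stage $m$. Using condition $(b)$, $\Phi(\la_m,f_{\la_m})(\om_m)\geq f_{\la_m}(\om_m)$, so the defining inequality of $\mathrm{val}$ gives, against any action $j_m$ of player~2,
\[
\la_m g(\om_m,s_{\la_m}(\om_m),j_m)+(1-\la_m)\EE[f_{\la_m}(\om_{m+1})\mid h_m,j_m]\ \geq\ f_{\la_m}(\om_m).
\]
I would then combine this with condition $(a)$ to control the error incurred when replacing $f_{\la_m}(\om_{m+1})$ by $f_{\la_{m+1}}(\om_{m+1})$: the total variation of $f$ between $\la_m$ and $\la_{m+1}$ is bounded by $\int_{\la_{m+1}}^{\la_m}\varphi(x)\,dx$, and since the $\la_m$ form (with high probability) a decreasing sequence converging to $0$, these telescoping errors sum to something finite and, after normalization, negligible. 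The upshot is that the process
\[
Z_m := f_{\la_m}(\om_m) + \sum_{k=1}^{m-1} \big(\text{realized stage payoff at }k\big) \cdot (\text{appropriate weight}_k) - (\text{correction terms})
\]
is a bounded submartingale under player~1's strategy, regardless of player~2's behaviour. The precise bookkeeping of the weights — ensuring the realized average payoff $\frac1M\sum_{m=1}^M g(\om_m,i_m,j_m)$ is pushed above $w$ as $M\to\infty$ — is where the recursion defining $\la_m$ must be chosen with care, and is the main obstacle: one must show that the ``discount budget'' $\la_m^{-1}$ grows at most linearly in $m$ (so that $\la_m\gtrsim 1/m$ and the process does not degenerate) while the cumulative shortfall stays bounded.

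Finally, I would invoke the martingale convergence theorem (or the optional stopping / Azuma-type inequality) to conclude that, almost surely and in expectation, $\liminf_M \frac1M\sum_{m=1}^M g(\om_m,i_m,j_m) \geq w = \limla f_\la(\om_1)-\ep$, uniformly over player~2's strategies; letting $\ep\to0$ yields the claim. I should remark that the hypotheses of Theorem~\ref{main} supply exactly what is needed to apply Theorem~\ref{mn} to $f_\la = v_\la$: property $(a)$ would require an additional Lipschitz-type bound on $\la\mapsto v_\la$ (the Bewley--Kohlberg semialgebraic estimate, or a puiseux expansion argument), while property $(b)$ holds with equality since $v_\la=\Phi(\la,v_\la)$; thus the asymptotically optimal canonical strategy constructed in Theorem~\ref{main} is the stationary ingredient feeding into the history-dependent strategy above. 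I will not reproduce the full Mertens--Neyman construction here, referring to \cite{MN81} for the detailed estimates, and present only the structure of the proof as outlined.
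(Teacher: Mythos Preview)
The paper does not prove Theorem~\ref{mn}; it merely quotes it as a known result due to Mertens and Neyman~\cite{MN81}, so there is no proof in the paper to compare your attempt against. Your outline is a faithful sketch of the Mertens--Neyman construction (adaptive discount $\la_m$ updated along the play, one-step inequality coming from~$(b)$, telescoping error in passing from $f_{\la_m}$ to $f_{\la_{m+1}}$ controlled by~$(a)$, and a sub/supermartingale argument to conclude), and you yourself acknowledge that it is only a sketch with the detailed estimates deferred to~\cite{MN81}.

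One correction to your final paragraph, however: you propose applying Theorem~\ref{mn} with $f_\la=v_\la$ and observe that verifying condition~$(a)$ would then require a Bewley--Kohlberg semialgebraic bound or a Puiseux expansion of $\la\mapsto v_\la$. But that is exactly what the present paper is designed to \emph{avoid}. The paper instead applies Theorem~\ref{mn} to $f_\la=w_\la^{\xx_\la}:=\min_{j\in\J^\Om}\ga_\la(\cdot,\xx_\la,j)$, where $(\xx_\la)_\la$ is the asymptotically optimal canonical family produced by Theorem~\ref{main}. For this choice, condition~$(a)$ is elementary: each $\ga_\la(\om,\xx_\la,j)$ is an explicit rational function of $\la$ (by the Freidlin--Wentzell formula and the form~\eqref{def} of $\xx_\la$), there are only finitely many $j\in\J^\Om$, and a minimum of finitely many bounded-variation functions has bounded variation. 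Condition~$(b)$ holds because $w_\la^x\leq\Phi(\la,w_\la^x)$ for any stationary $x$. So the canonical strategy is not merely ``the stationary ingredient'' in the Mertens--Neyman scheme; it is what makes both hypotheses of Theorem~\ref{mn} verifiable without any appeal to semialgebraic geometry.
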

One can use Theorem \ref{main} to prove the existence of the uniform value. Indeed, for any $x\in \De(\I)^\Om$, $\om\in \Om$ and $\la\in (0,1]$, let $w_\la^x(\om):=\min_{j \in \J^\Om}\ga_\la(\om,x,j)$ be the payoff guaranteed by $x$ in $\Ga_\la(\om)$.
 One can check that $w^x_\la\leq \Phi(\la,w^x_\la)$, for all $\la\in(0,1]$. Besides, for any $\xx\in \XX$, the functions $(\la \mapsto w^{\xx_\la}_\la(\om))_{\om \in \Om}$ are of bounded variation, so that player $1$ can guarantee $\limla w_\la^{\xx_\la}$ for any $\xx\in \XX$ by Theorem \ref{mn}. In particular, if $(\xx_\la)_\la$ is asymptotically optimal, player $1$ can guarantee $\limla v_\la$.
\item [$(2)$] The existence of an $\xx\in \XX$ such that $(\xx_\la)_\la$ is asymptotically optimal was
 already noticed by Solan and Vieille \cite{SV10}. The result was deduced from the semi-algebraicity of 
$\la\mapsto v_\la$, obtained in \cite{BK76} using Tarski-Seidenberg elimination theorem. 

\item [$(3)$] In the system \eqref{ns1}-\eqref{ns3} for the exponents (first part of the proof of Proposition \ref{elputolema}) note that 
all the entries of $A$ are in $\{-1,0,1\}$. This implies the existence of a solution having all its coordinates in $\{0,1/N,2/N,\dots \}$, %Q_+^N:=\{p/q \ | \ (p,q) \in \NN\times \NN^*, q\leq N\}$,
 for some $N\leq|\Om||\I|^{\sqrt{|\Om||\I|}}$. %The bound fo by Hadamard's identity.
%%%%%%%%%%%%%%%%% include ?$$$$
 \item[$(4)$] Our approach fails without the finiteness assumption on $\I$, $\J$ and $\Om$.
 A recent example where $\I$ and $\J$ are compact, $q$ is continuous, $g$ is independent of the actions and the family $(v_\la)_{\la}$ does not converge is due to Vigeral \cite{vigeral12}.
%%%%%%%%%%%%%%%%% include ?$$$$
\end{itemize}

\section*{Acknowledgments} I am particularly indebted to Sylvain Sorin for his careful reading and comments on 
earlier versions and to Nicolas Vieille for his valuable remarks. I would like to thank Eilon Solan for his accurate comments and remarks, and also Fabien Gensbittel, Mario Bravo and Guillaume Vigeral for the discussions at an early stage of this work.
% \bibliographystyle{ormsv080} % outcomment this and next line in Case 1
% \bibliography{Jeux} % if more than one, comma separated

%  \bibliographystyle{amsplain}
%  \bibliography{Jeux}

% Enter the text of acknowledgments here

% References here (outcomment the appropriate case) 

% CASE 1: BiBTeX used to constantly update the references 
%   (while the paper is being written).
%\bibliographystyle{ormsv080} % outcomment this and next line in Case 1
%\bibliography{<your bib file(s)>} % if more than one, comma separated

% CASE 2: BiBTeX used to generate mypaper.bbl (to be further fine tuned)
%\input{mypaper.bbl} % outcomment this line in Case 2

\end{document}